\newtheorem{Lem}{Lemma}
\newtheorem{Theo}{Theorem}
\begin{document}
\begin{center}
\title{The exponential sum over squarefree integers}
\author[J.-C. Schlage-Puchta]{Jan-Christoph Schlage-Puchta}
\end{center}
\maketitle
Denote by $r_\nu(N)$ the number of representations of $N$ as the sum
of $\nu$ squarefree numbers. In a series of papers Evelyn and Linfoot
\cite{EL1}--\cite{EL5} proved that
\[
r_\nu(N)=\mathfrak{S}_\nu(N)N^{\nu-1} +
\mathcal{O}(N^{\nu-1-\theta(\nu)+\varepsilon}),
\]
where
\[
\mathfrak{S}_\nu(N) = \frac{1}{(\nu-1)!}\left(\frac{6}{\pi^2}\right)^\nu
\prod_{p^2\nmid N}\left(1-\frac{1}{(1-p^2)^\nu}\right)
\prod_{p^2|N}\left(1-\frac{1}{(1-p^2)^{\nu-1}}\right),
\]
and
\[
\theta(2)=\theta(3)=\frac{1}{3},\qquad \theta(\nu)=\frac{1}{2}-\frac{1}{2\nu}
\quad(\nu\geq 4).
\]
Mirsky\cite{Mirsky} improved the error term for $\nu\geq 3$ to
$\theta(\nu)=\frac{1}{2}-\frac{1}{4\nu-2}$. Using a new approach to
bound the minor arc integral develloped by Br{\"u}dern, Granville,
Perelli, Vaughan and Wooley\cite{BGPVW}, Br{\"u}dern and
Perelli\cite{BP} showed that $\theta=\frac{1}{2}$ for all $\nu\geq 3$,
and that any further improvement would imply a quasiriemannian
hypothesis. Moreover, assuming the generalized riemannian hypothesis,
they proved that $\theta(3)=\frac{3}{4}+\frac{1}{14}$ and
$\theta(\nu)=\frac{3}{4}$ for all $\nu\geq 4$. These result are
optimal apart from the summand $\frac{1}{14}$; in personal
communication Br{\"u}dern conjectured
that $\theta(3)=\frac{3}{4}$ should hold true. It is the aim of this
note to prove this conjecture. 

Define $S(\alpha)=\sum_{n\leq N}\mu^2(n) e(\alpha n)$, and, for
integers $N$ and $Q$ satisfying $1\leq Q<N^{1/2}/2$, let $\mathfrak{M}(Q)$ be
the union of all intervals $\{\alpha: |\alpha q- a|\leq QN^{-1}\}$,
where $q\leq Q$, and $(a, q)=1$, and set $\mathfrak{m}(Q) = [QN^{-1},
1-QN^{-1}]\setminus\mathfrak{M}(Q)$. With these notation we will prove the
following. 
\begin{Theo}
\label{thm:minorarcs}
We have $S(\alpha)\ll N^{1+\varepsilon}Q^{-1}$ for all
$\alpha\in\mathfrak{m}(Q)$, provided that $Q\leq N^{1/2}$.
\end{Theo}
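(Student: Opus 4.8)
The plan is to begin from the squarefree sieve identity $\mu^2(n)=\sum_{d^2\mid n}\mu(d)$, which after writing $n=d^2m$ gives
\[
S(\alpha)=\sum_{d\le N^{1/2}}\mu(d)\sum_{m\le N/d^2}e(\alpha d^2m).
\]
Estimating the inner geometric progression by $\min(N/d^2,\|\alpha d^2\|^{-1})$, where $\|x\|$ is the distance from $x$ to the nearest integer, reduces the theorem to bounding $\sum_{d\le N^{1/2}}\min(N/d^2,\|\alpha d^2\|^{-1})$. To feed in the minor-arc hypothesis I would apply Dirichlet's theorem with denominator $N/Q$, obtaining coprime $a,q$ with $q\le N/Q$ and $|\alpha-a/q|\le Q/(qN)$; since $\alpha\in\mathfrak m(Q)$ rules out all denominators up to $Q$, one must have $q>Q$. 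The gain is that $\beta:=\alpha-a/q$ then satisfies $|\beta|<1/N$, so that $|\beta|d^2\le (Q/q)(d^2/N)$ stays within a constant multiple of the finest relevant scale $d^2/N$. Thus $\beta$ is negligible and $\|\alpha d^2\|$ may be replaced by $\|ad^2/q\|$ throughout.

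After a dyadic splitting $d\sim D$, the whole matter comes down to estimating $N(\delta;D)=\#\{d\sim D:\|ad^2/q\|\le\delta\}$, that is, to understanding the distribution modulo $q$ of the squares $d^2$ with $d$ running over a short interval. I would bound this in two complementary ways: by completing the sum and invoking the bound $\bigl|\sum_{d\sim D}e(kad^2/q)\bigr|\ll q^\varepsilon(1+D/q)(k,q)^{1/2}q^{1/2}$ for incomplete quadratic Gauss sums, which produces the equidistribution main term $\delta D$ together with an error of size $q^{1/2+\varepsilon}$; and, for short intervals $D\le q^{1/2}$, by the elementary observation that $d_1^2\equiv d_2^2\ (\mathrm{mod}\ q)$ with $d_1,d_2\sim D$ forces $q\mid(d_1-d_2)(d_1+d_2)$ with both factors below $q^{1/2}$, so that each residue is hit $\ll q^\varepsilon$ times. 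Taking the minimum of the Gauss-sum bound, this sparsity bound, and the trivial count $N(\delta;D)\le D$ gives a serviceable estimate. I expect this counting step to be the principal obstacle: the Gauss-sum error genuinely carries a factor $q^{1/2}$, which is sharp precisely when $q$ has a large square factor, and one has to be sure that near the transition $D\asymp q^{1/2}$ the sparser short-interval count is used, so that this $q^{1/2}$ is never allowed to interact badly with the cap $N/d^2$.

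Finally I would assemble the pieces by a further dyadic decomposition over the size of $\|ad^2/q\|$. The equidistribution main term contributes $\ll D\log N$ per block, hence $\ll N^{1/2+\varepsilon}$ in all, which is $\le N^{1+\varepsilon}/Q$ exactly because $Q\le N^{1/2}$. For the remaining arithmetic contributions I would split at $d\asymp Q^{1/2}$. When $d\le Q^{1/2}$ one has $d^2\le Q$, so $\|\alpha d^2\|<Q/N$ would place $\alpha$ within $Q/N$ of a rational of denominator at most $Q$, contradicting $\alpha\in\mathfrak m(Q)$; hence $\|\alpha d^2\|\gg Q/N$ and each such term is $\ll N/Q$. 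When $d>Q^{1/2}$ I would retain the cap $N/d^2$, the decisive point being that the diagonal then telescopes, $\sum_{d>Q^{1/2}}N/d^2\ll N/Q$, while the contribution of $q$ (arising from the $+1$ in the counts) is $\ll q\log N\le (N/Q)\log N$. Collecting the three ranges yields $S(\alpha)\ll N^{1+\varepsilon}/Q$; the hypothesis $Q\le N^{1/2}$ is used only to bound the main term and to sum the diagonal.
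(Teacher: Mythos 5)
Your skeleton coincides with the paper's up to the decisive counting input: the sieve identity, the bound $\min(N/d^2,\|\alpha d^2\|^{-1})$, Dirichlet's theorem forcing $Q<q\leq N/Q$, the replacement of $\alpha$ by $a/q$ at scales above $D^2/N$, and the dyadic count of near-resonant $d$ all match, and your short-interval sparsity bound for $D\leq q^{1/2}$ is in substance the paper's treatment of that range (where it does suffice, since the nonzero residues $ad^2 \bmod q$ are $\ll q^\varepsilon$-fold and each dyadic $\delta$-block contributes $\ll q+\delta^{-1}$, with $\delta\gg Q/N$ guaranteed by the minor-arc condition). The genuine gap is in the regime $D$ at and just above $q^{1/2}$, which you flag as the danger point but do not actually close. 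There the sparsity argument is no longer available, and completion with the Gauss-sum estimate gives $N(\delta;D)\ll \delta D+q^{1/2+\varepsilon}(1+D/q)$ with an error \emph{independent of} $\delta$. Multiplied by the cap $N/D^2$ at the bottom scale $\delta\asymp D^2/N$, the term $q^{1/2+\varepsilon}$ contributes $\asymp Nq^{1/2+\varepsilon}D^{-2}$, which at $D\asymp q^{1/2}$ is $\asymp Nq^{-1/2+\varepsilon}$. On the minor arcs you only know $q>Q$, so this is $\ll N^{1+\varepsilon}Q^{-1}$ only when $q\gg Q^{2}$; for $Q<q\ll Q^{2}$ it fails. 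Concretely, with $Q=N^{0.45}$, $q\asymp N^{1/2}$ and $D\asymp N^{1/4}$, your error is $\asymp N^{3/4+\varepsilon}$ against the target $N^{0.55}$. This is precisely the obstruction that limited \cite[Theorem~4]{BP} to $Q\leq N^{3/7}$, and no rearrangement of Erd\H{o}s--Tur\'an plus Gauss sums can remove it, because the completed-sum error cannot see that the count must shrink as the window $\delta$ shrinks below $1/q$.

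What is missing is the paper's new Lemma: for $D^2>\frac14 q$ one has $W(D,z)=\#\{d\leq D:\|d^2\alpha\|\leq z\}\ll D^2q^{-1}+D^{1+\varepsilon}z^{1/2}$. Its proof is elementary: pigeonhole the $d^2$ into intervals of length $q$, apply Cauchy--Schwarz to produce $\gg W^2K^{-1}$ pairs with $1\leq|d_1^2-d_2^2|\leq q$, observe that each difference $n$ arises $\ll q^\varepsilon$ times by the divisor bound, and that $\|n\alpha\|\leq 2z$ together with $|\alpha-a/q|<q^{-2}$ confines $n$ to $\ll qz+1$ values. The resulting $z^{1/2}$-decay is exactly what replaces your $\delta$-independent $q^{1/2}$: at the bottom scale it yields $(N/D^2)\cdot D^{1+\varepsilon}(D^2/N)^{1/2}=N^{1/2+\varepsilon}\leq N^{1+\varepsilon}Q^{-1}$, uniformly in $q>Q$. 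Two smaller slips in your assembly: as written, $\sum_{d>Q^{1/2}}N/d^2\asymp NQ^{-1/2}$, not $N/Q$ --- the telescoping is legitimate only as a sum over dyadic blocks $D>Q^{1/2}$ with $O(N^\varepsilon)$ capped terms per block, and that per-block count is exactly what is unproven where the gap sits; likewise, for $d\leq Q^{1/2}$ the pointwise bound $\|\alpha d^2\|\gg Q/N$ makes each term $\ll N/Q$, but there are $Q^{1/2}$ such terms, so this range must be finished with your sparsity count per $\delta$-block (which works) rather than with the pointwise estimate alone.
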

Under the restriction $Q\leq N^{3/7}$, this was proven in
\cite[Theorem~4]{BP}. As already remarked in \cite[Sec. 5]{BP}, the
weakening of the assumption on $Q$ implies the following.
\begin{Theo}
Assume the generalized riemannian hypothesis. Then we have
\[
r_3(N) = \mathfrak{S}(N)N^2 + \mathcal{O}(N^{5/4+\varepsilon}).
\]
\end{Theo}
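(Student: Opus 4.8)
The plan is to apply the Hardy--Littlewood circle method to $S(\alpha)$. Starting from
\[
r_3(N)=\int_0^1 S(\alpha)^3\,e(-\alpha N)\,d\alpha,
\]
I would fix $Q=N^{1/2}$, the largest value now admitted by Theorem~\ref{thm:minorarcs}, and dissect the integral into the major arcs $\mathfrak{M}(Q)$ and the minor arcs $\mathfrak{m}(Q)$, the two short intervals abutting $0$ and $1$ being absorbed into the arc about $a/q=0/1$. The main term $\mathfrak{S}(N)N^2$ is produced on $\mathfrak{M}(Q)$, and the entire purpose of the enlarged range for $Q$ is to make $\mathfrak{m}(Q)$ small and $S$ correspondingly tiny there; the two pieces are therefore handled by quite different means.

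On the minor arcs I would insert the pointwise bound of Theorem~\ref{thm:minorarcs} into the mean-value machinery of \cite{BGPVW}. With $Q=N^{1/2}$ now permitted, Theorem~\ref{thm:minorarcs} gives $S(\alpha)\ll N^{1/2+\varepsilon}$ uniformly on $\mathfrak{m}(Q)$. Since $\int_0^1|S(\alpha)|^2\,d\alpha=\sum_{n\le N}\mu^2(n)\asymp N$, the naive estimate $\sup_{\mathfrak{m}}|S|\cdot\int_0^1|S|^2$ yields only $N^{3/2+\varepsilon}$, which is too large. The mean-value estimates save a further factor $N^{1/4}$: the computation indicated in \cite[Sec.~5]{BP} shows that, if the bound $S(\alpha)\ll N^{1+\varepsilon}Q^{-1}$ holds throughout $\mathfrak{m}(Q)$ with $Q=N^{\kappa}$, then
\[
\int_{\mathfrak{m}(Q)}|S(\alpha)|^3\,d\alpha\ll N^{7/4-\kappa+\varepsilon}.
\]
The range $\kappa\le 3/7$ available in \cite{BP} left this larger than $N^{5/4}$ by a factor $N^{1/14}$ --- precisely the deficiency recorded by the summand $\tfrac1{14}$ --- whereas the present value $\kappa=\tfrac12$ removes it and gives $N^{5/4+\varepsilon}$.

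On the major arcs I would follow the treatment of \cite{BP}; this is where the generalized Riemann hypothesis enters. On each arc $|\alpha-a/q|\le QN^{-1}$ one approximates $S(\alpha)$ by its expected main term, the error being controlled by the distribution of squarefree numbers in the progression $a\bmod q$, hence by estimates for $\sum_{n\le x}\mu^2(n)\chi(n)$ that follow from GRH for the relevant Dirichlet $L$-functions. After cubing and integrating, the major arcs contribute $\mathfrak{S}(N)N^2+O(N^{5/4+\varepsilon})$. This half already attains the exponent $5/4$; it is the true bottleneck, consistent with the optimality of $\theta(3)=\tfrac34$ noted above.

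The principal difficulty has in effect already been met: it is Theorem~\ref{thm:minorarcs}, whose extension of the admissible range to $Q\le N^{1/2}$ is the sole new ingredient. Granted that theorem and the GRH-based major-arc analysis of \cite{BP}, the deduction is the routine bookkeeping of \cite[Sec.~5]{BP}, the only thing to verify being that at $\kappa=\tfrac12$ the minor-arc estimate $N^{5/4+\varepsilon}$ and the major-arc error $N^{5/4+\varepsilon}$ are of the same size, so that neither dominates and the conjectured exponent is reached. The one genuinely delicate point is uniformity at the endpoint $Q=N^{1/2}$: one must confirm that the implied constants in the mean-value input remain uniform up to this value, which is the natural limit of the method, set by the range $d\le N^{1/2}$ of the divisor in the identity $\mu^2(n)=\sum_{d^2\mid n}\mu(d)$.
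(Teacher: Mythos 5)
Your proposal is correct and takes essentially the same route as the paper, which proves this theorem purely by reference: it observes that the circle-method analysis of \cite[Sec.~5]{BP} (GRH-based major arcs, minor arcs via the mean-value machinery of \cite{BGPVW}) already yields the claimed error once the minor-arc bound holds with $Q=N^{1/2}$, so that Theorem~\ref{thm:minorarcs}'s extension of the admissible range from $Q\leq N^{3/7}$ (which gave the exponent $\tfrac54+\tfrac1{14}$) to $Q\leq N^{1/2}$ gives $N^{5/4+\varepsilon}$. Your bookkeeping, including the exponent $7/4-\kappa$ that recovers both values, matches this deduction exactly.
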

By Dirichlet's theorem on diophantine approximation, for every
$\alpha\in\mathfrak{m}(Q)$ there exist coprime integers $a, q$ with
$q\leq NQ^{-1}$, such that $|q\alpha - a|\leq N^{-1}Q$. By the
definition of $\mathfrak{m}(Q)$, we necessarily have $q>Q$. Hence,
Theorem~\ref{thm:minorarcs} is essentially equivalent to the
following.
\begin{Theo}
\label{thm:main}
Define $S(\alpha)$ as above, and let $q$ be an integer satisfying
$|\alpha q- a|\leq q^{-1}$. Then we have 
\[
|S(\alpha)|\ll N^{1+\varepsilon} q^{-1} + N^\varepsilon q.
\]
\end{Theo}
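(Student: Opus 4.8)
The plan is to start from the convolution identity $\mu^2(n)=\sum_{d^2\mid n}\mu(d)$ and carry out the $m$-summation. Writing $n=d^2m$ gives $S(\alpha)=\sum_{d\le N^{1/2}}\mu(d)\sum_{m\le N/d^2}e(\alpha d^2m)$, and bounding the inner geometric sum by $\min(N/d^2,\|\alpha d^2\|^{-1})$, where $\|\theta\|$ denotes the distance of $\theta$ to the nearest integer, reduces everything to the estimate
\[
\Sigma:=\sum_{d\le N^{1/2}}\min\!\left(\frac{N}{d^2},\frac{1}{\|\alpha d^2\|}\right)\ll N^{1+\varepsilon}q^{-1}+N^\varepsilon q .
\]
Here I write $\alpha=a/q+\beta$ with $(a,q)=1$ and $|\beta|\le q^{-2}$, and split the range of $d$ at $D_1:=\min\big((2q|\beta|)^{-1/2},N^{1/2}\big)$, which satisfies $D_1\gg q^{1/2}$.

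For the initial segment $d\le D_1$ I would argue directly from the approximation. Since $\|\alpha d^2\|=\|s_d/q+\beta d^2\|$ with $s_d:=ad^2\bmod q$, and $|\beta|d^2\le\tfrac1{2q}$ throughout this range, the terms fall into two families. If $q\mid d^2$, then $(a,q)=1$ forces $d$ to be a multiple of the least integer $w$ with $q\mid w^2$, and $w\ge q^{1/2}$; summing $\min(N/d^2,\,\cdot\,)$ over $d=w,2w,\dots$ contributes $\ll N/q$. If $q\nmid d^2$, then $s_d\ne0$, so $\|\alpha d^2\|\ge\tfrac12\|s_d/q\|$ and hence $\|\alpha d^2\|^{-1}\ll q/t_d$ with $t_d=\min(s_d,q-s_d)$; grouping the $d$ by the value of $t_d$ and counting the solutions of $d^2\equiv\pm a^{-1}t\pmod q$ by the divisor bound yields $\ll (D_1+q)N^\varepsilon$. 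Both families lie within the target.

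The heart of the matter is the tail $D_1<d\le N^{1/2}$, which I would treat dyadically, $d\sim D$. The decisive input is a bound for $\mathcal N(\delta,D):=\#\{d\sim D:\|\alpha d^2\|\le\delta\}$. Squaring and using that $\|\alpha d_1^2\|,\|\alpha d_2^2\|\le\delta$ imply $\|\alpha(d_1^2-d_2^2)\|\le2\delta$ linearises the problem: the diagonal $d_1=d_2$ contributes only $\mathcal N$ itself, while the off-diagonal is at most $\sum_{e\ne0}R(e)\,\mathbf 1[\|\alpha e\|\le2\delta]$, where $R(e)=\#\{d_1,d_2\sim D:d_1^2-d_2^2=e\}\ll D^\varepsilon$ by the divisor bound and $0<|e|\le 4D^2$. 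Thus $\mathcal N^2\le\mathcal N+(\text{off-diagonal})$, and the standard count $\#\{0<|e|\le4D^2:\|\alpha e\|\le2\delta\}\ll(1+\delta q)(1+D^2/q)$ gives
\[
\mathcal N(\delta,D)\ll D^\varepsilon\big(1+(\delta q)^{1/2}+Dq^{-1/2}+\delta^{1/2}D\big).
\]
Feeding this into the dyadic evaluation of $\sum_{d\sim D}\min(N/D^2,\|\alpha d^2\|^{-1})$ produces a block bound $\ll N^\varepsilon\big(N/D^2+N^{1/2}q^{1/2}/D+N/(Dq^{1/2})+N^{1/2}\big)$, and summing over dyadic $D\ge D_1\gg q^{1/2}$, where the decreasing terms are maximised, gives $\ll N^{1+\varepsilon}q^{-1}+N^{1/2+\varepsilon}$. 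Since $q\le N^{1/2}$ we have $N^{1/2}\le N/q$, and collecting the initial segment and the tail yields the theorem.

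I expect the main obstacle to be precisely the counting bound for $\mathcal N(\delta,D)$. A cruder second-moment argument that retains the full block length $D$ on the diagonal carries a spurious factor $D^{1/2}$, and summing the resulting $N/D^{3/2}$ over $D\gg q^{1/2}$ only covers a restricted range of $q$. The gain is the observation that the diagonal of $\mathcal N^2$ is the near-resonant count $\mathcal N$ rather than $D$, so the quadratic inequality $\mathcal N^2\le\mathcal N+(\text{off-diagonal})$ already eliminates the loss; the remaining ingredients — the divisor bound for $R(e)$, the elementary linear count of $e$ with $\|\alpha e\|$ small, and the fact that the resonant $d$ (those with $q\mid d^2$) are forced to be large, where $(a,q)=1$ enters — are routine.
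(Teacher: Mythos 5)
Your proposal is correct in substance and follows the paper's route almost step for step: the same reduction via $\mu^2(n)=\sum_{d^2\mid n}\mu(d)$ and $\min(N/d^2,\|\alpha d^2\|^{-1})$, a counting estimate for the near-resonant $d$ proved by squaring, the divisor bound applied to $e=d_1^2-d_2^2$, the elementary count of $e$ with $\|\alpha e\|$ small, and a separate elementary treatment of an initial range of $d$ through the rational approximation. Your bound $\mathcal{N}(\delta,D)\ll D^\varepsilon\big(1+(\delta q)^{1/2}+Dq^{-1/2}+\delta^{1/2}D\big)$ is the paper's Lemma ($W(D,z)\ll D^2q^{-1}+D^{1+\varepsilon}z^{1/2}$ for $D^2>q/4$) in an equivalent form: the paper restricts differences to $|d_1^2-d_2^2|\leq q$ by cutting $[1,D^2]$ into intervals of length $q$ and then counts $n\leq q$ with $\|n\alpha\|\leq 2z$, whereas you count over the full range $|e|\leq 4D^2$ and pay the factor $1+D^2/q$; these are the same estimate, and your quadratic inequality $\mathcal{N}^2\leq\mathcal{N}+(\text{off-diagonal})$ plays exactly the role of the paper's dichotomy $W\leq 2K$ or not. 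The one genuine structural deviation is your $\beta$-dependent split point $D_1=\min\big((2q|\beta|)^{-1/2},N^{1/2}\big)$: the paper splits each dyadic block at $D\asymp q^{1/2}$, and since your tail estimate is valid for all $D\gg q^{1/2}$, the longer initial segment buys you nothing --- and it is the only place where your argument, as written, has a defect.

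Namely, since $D_1$ can be as large as $N^{1/2}\gg q$, the residues $ad^2\bmod q$ recur in your initial segment, and your claim that the congruence $d^2\equiv\pm a^{-1}t\pmod q$ has $\ll N^\varepsilon$ solutions ``by the divisor bound'' is false for general $q$: for instance $x^2\equiv p^2\pmod{p^3}$ has $2p\asymp q^{1/3}$ solutions modulo $q=p^3$, and more generally the solution count can reach about $\gcd(t,q)^{1/2}$ when $\gcd(t,q)$ contains a large square. The correct uniform bound is $\rho(t)\leq 2^{\omega(q)+1}\gcd(t,q)^{1/2}$, and your conclusion survives only because of the $1/t$ weight: grouping by $g=\gcd(t,q)$ gives $\sum_{t\leq q}\gcd(t,q)^{1/2}t^{-1}\ll\log q\sum_{g\mid q}g^{-1/2}\ll q^\varepsilon$, whence the claimed $(D_1+q)N^\varepsilon$. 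So the step is repairable, but not by the reason you state; alternatively, split at $q^{1/2}$ as the paper does, where $d^2<q$ makes the residues $ad^2\bmod q$ pairwise distinct, no multiplicity question arises, and the whole segment collapses to $\sum_{d}\|ad^2/q\|^{-1}\ll q\log q$ (the argument of Lemma~1 of \cite{BP} that the paper invokes). One further small point: the theorem carries no hypothesis $q\leq N^{1/2}$, so your final absorption should read $N^{1/2}\leq\max(Nq^{-1},q)$, valid for all $q$ (for $q>N$ the statement is trivial since $|S(\alpha)|\leq N$), which is exactly the paper's closing remark that the middle term $N^{1/2+\varepsilon}$ is always dominated by the first or the last.
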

We approach Theorem~\ref{thm:main} by the following lemma, which
replaces Lemma~1 in \cite{BP}.
\begin{Lem}
Let $\alpha\in(0, 1)$ be a real number, and assume that
$|q\alpha-a|<\frac{1}{q}$. Let $D$ be an integer, and denote by $W(D,
z)$ the number of integers 
$d\leq D$ satisfying $\|d^2\alpha\|\leq z$. Then, for
$D^2>\frac{1}{4}q$, we have  
\[
W(D, z)\ll D^2q^{-1} + D^{1+\varepsilon} z^{1/2}.
\]
\end{Lem}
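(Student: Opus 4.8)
The plan is to bound $W(D,z)$ by a Weyl-type differencing (pair-correlation) argument that reduces the quadratic counting problem to a linear one for $\|n\alpha\|$. Write $\mathcal{D}=\{d\le D:\|d^2\alpha\|\le z\}$, so that $W(D,z)=|\mathcal{D}|=:W$, and assume as usual that $(a,q)=1$. If $d_1,d_2\in\mathcal{D}$, then by subadditivity of $\|\cdot\|$ one has $\|(d_1^2-d_2^2)\alpha\|\le\|d_1^2\alpha\|+\|d_2^2\alpha\|\le 2z$, so the difference map sends $\mathcal{D}\times\mathcal{D}$ into the set of integers $n$ with $|n|<D^2$ and $\|n\alpha\|\le 2z$. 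A non-zero value $n=(d_1-d_2)(d_1+d_2)$ is attained at most $d(|n|)\ll D^\varepsilon$ times, since it corresponds to a factorisation of $|n|$, whereas $n=0$ forces $d_1=d_2$ and is hit exactly $W$ times inside $\mathcal{D}\times\mathcal{D}$. Counting $\mathcal{D}\times\mathcal{D}$ both ways therefore yields
\[
W^2\le W+D^\varepsilon\,N_1,\qquad N_1:=\#\{n:0<|n|<D^2,\ \|n\alpha\|\le 2z\}.
\]

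First I would estimate $N_1$. Since $|\alpha-a/q|<q^{-2}$ with $(a,q)=1$, a standard first-degree counting estimate applies: splitting the range of length $2D^2$ into blocks of $q$ consecutive integers, on each of which $na\bmod q$ runs through all residues once while $n\beta$ drifts by at most $q^{-1}$, gives at most $\ll qz+1$ admissible $n$ per block and hence $N_1\ll (D^2q^{-1}+1)(qz+1)$. Expanding, $N_1\ll D^2z+D^2q^{-1}+qz+1$, and here the hypothesis $D^2>q/4$ is decisive: it gives $q<4D^2$, so that $qz\ll D^2z$ and $1\ll D^2q^{-1}$, whence $N_1\ll D^2z+D^2q^{-1}$. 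Substituting back, either $W\ll 1$ or $W^2\ll D^\varepsilon(D^2z+D^2q^{-1})$, so that
\[
W\ll D^{1+\varepsilon}z^{1/2}+D^{1+\varepsilon}q^{-1/2}.
\]
The first term is already the claimed one, and for the second the same inequality $q<4D^2$ gives $q^{1/2}\ll D$, hence $Dq^{-1/2}\ll D^2q^{-1}$.

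The main obstacle is the off-diagonal term: one must combine the \emph{sharp} linear count of $n$ with $\|n\alpha\|\le 2z$ with the divisor bound for the representations $n=d_1^2-d_2^2$, while keeping track that the diagonal contributes only $W$ (and not the trivial $D$), for it is exactly this that converts the estimate into a useful quadratic inequality rather than a vacuous one. A secondary, purely bookkeeping, point concerns recovering the clean term $D^2q^{-1}$: differencing produces $D^{1+\varepsilon}q^{-1/2}$, which is $\ll D^2q^{-1}$ as soon as $D\gg q^{1/2+\varepsilon}$, while for $D\ge q$ a direct residue count (each class $\bmod q$ meeting $[1,D]$ in $\ll D/q$ integers, and at most $q$ classes being admissible) gives $W\ll D\ll D^2q^{-1}$ outright; in the remaining narrow range $q^{1/2}/2<D\le q^{1/2+\varepsilon}$ every bound in sight is of size $\ll D^\varepsilon$, which is harmless. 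Patching these ranges together delivers the stated estimate $W(D,z)\ll D^2q^{-1}+D^{1+\varepsilon}z^{1/2}$.
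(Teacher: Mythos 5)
Your proof is correct, and at bottom it is the same argument as the paper's --- differencing the quadratic phases, the divisor bound for representations $n=d_1^2-d_2^2$, and the standard linear count of $n$ with $\|n\alpha\|\le 2z$ using $(a,q)=1$ --- but the implementation differs in a way worth recording. The paper only retains pairs whose squares lie in a common block $[kq,(k+1)q)$, so that every difference satisfies $1\le|d_1^2-d_2^2|\le q$ and a single application of the linear count ($\ll qz+1$ values of $n\le q$) suffices; the price is a Cauchy--Schwarz step $\sum_k a_k^2\ge W^2/K$ with $K=[D^2q^{-1}]+1$ to bound the retained pairs from below, together with the dichotomy $W\le 2K$ versus $W>2K$. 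You instead difference globally over all of $\mathcal{D}\times\mathcal{D}$, subtract the diagonal exactly (contributing $W$, not $D$ --- you are right that this is what keeps the quadratic inequality $W^2\le W+D^\varepsilon N_1$ nontrivial), and push the block decomposition into the linear estimate $N_1\ll(D^2q^{-1}+1)(qz+1)$ over the full range $|n|<D^2$. The two bookkeepings are exactly equivalent in strength --- both land on $W\ll D^{1+\varepsilon}z^{1/2}+D^{1+\varepsilon}q^{-1/2}$, with the hypothesis $D^2>\frac14 q$ used in the same two places --- but your version dispenses with the Cauchy--Schwarz/blocking step and with the paper's auxiliary set $\mathcal{N}$ of attained differences, and is arguably cleaner. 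One remark on your closing patchwork over the ranges of $D$: you are holding yourself to a stricter standard than the source, since the paper stops at precisely the same point, absorbing $D^{1+\varepsilon}q^{-1/2}$ into $D^2q^{-1}$ via $D>\frac12 q^{1/2}$ at the cost of an implicit $D^\varepsilon$; and in your narrow range $q^{1/2}/2<D\le q^{1/2+\varepsilon}$ the bound $W\ll D^{O(\varepsilon)}$ still carries that factor against a target first term which may be of size $\asymp 1$, so the $\varepsilon$ is not actually eliminated there either. Strictly speaking both proofs deliver $W\ll D^{\varepsilon}\bigl(D^2q^{-1}+Dz^{1/2}\bigr)$, which is all that the application in the proof of Theorem~\ref{thm:main} requires (every term there already carries an $N^\varepsilon$), so this discrepancy with the lemma's clean statement is harmless and is shared by the paper itself.
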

\begin{proof}
Cut the interval $[1, D^2]$ into $K=[D^2q^{-1}]+1$ intervals of length
$q$, where the last interval may be shorter. For $k\leq K$, let $a_k$ be the number
of integers $d$, such that $\|d^2\alpha\|\leq z$ and $kq\leq d^2<
(k+1)q$. Then $\sum_{k\leq K} a_k = W(D, z)$, and by the
arithmetic-quadratic mean inequality, $\sum_{k\leq K} a_k^2\geq W(D,z)^2
K^{-1}$. Denote by $\mathcal{D}$ the set of all pairs $(d_1, d_2)$
with the properties that $\|d_i^2\alpha\|\leq z$ and
$1\leq|d_1^2-d_2^2|\leq q$. Then either $W(D, z)\leq 2K$, which is
sufficiently small, or we can bound $|\mathcal{D}|$ from below via
\[
|\mathcal{D}|\geq \sum_k \binom{a_k}{2}\gg \sum_k a_k^2 - \sum_k a_k
\gg \sum_k a_k^2 \gg W(D, z)^2 K^{-1}.
\]
Denote by
$\mathcal{N}\subseteq[1, q]$ the set of all values of $|d_1^2-d_2^2|$,
where $d_1, d_2$ ranges over all pairs in $\mathcal{D}$. Then every pair in
$\mathcal{D}$ gives rise to an element of $\mathcal{N}$, and the
number of different pairs $d_1, d_2$ having the same difference
$d_1^2-d_2^2=n$ is bounded above by the number of divisors of $n$,
and therefore $\ll q^\varepsilon$. Hence, we decuce
\[
W(D, z)^2 \ll |\mathcal{D}|K\ll |\mathcal{N}| K q^{\varepsilon}.
\]
On the other hand, for every $n\in\mathcal{N}$, we have
$\|n\alpha\|\leq \|d_1^2\alpha\|+\|d_2^2\alpha\|\leq 2z$, hence,
\[
W(D, z)^2 \ll D^2 q^{\varepsilon-1}\big|\big\{n\leq q: \|\alpha n\|\leq
2z\big\}\big|\ll D^2 q^{\varepsilon-1} (qz+1).
\]
From this we obtain in the case $W(D, z)> 2K$, that 
\[
W(D, z)\ll D^{1+\varepsilon} z^{1/2} + D^{1+\varepsilon}q^{-1/2},
\]
which is again of the right size, since $D>\frac{1}{2}q^{1/2}$.
\end{proof}
\begin{proof}[Proof of Theorem~$\ref{thm:main}$]
Write 
\begin{eqnarray*}
S(\alpha) & = & \sum_{d\leq\sqrt{N}}\mu(d)\sum_{m\leq Nd^{-2}}
e(\alpha d^2 m)\\
 & \ll & \log N \max\limits_{1\leq D\leq \sqrt{N}/2} \sum_{D\leq d<2D}
\min\Big(\frac{N}{D^2}, \|\alpha d^2\|^{-1}\Big)\\
 & = & \log N \max\limits_{1\leq D\leq \sqrt{N}/2} \Upsilon(\alpha, D),
\end{eqnarray*}
say. To prove Theorem~\ref{thm:main}, it suffices to show that
$\Upsilon(\alpha, D)\ll N^{1+\varepsilon} Q^{-1}$ for all $D\leq
\sqrt{N}/2$. For $D>\frac{1}{4}q^{1/2}$, we have 
\begin{eqnarray*}
\Upsilon(\alpha, D) & \ll & \log N\max\limits_{z>N/D^2} z^{-1} W(D, z)\\
 & \ll & \log N\max\limits_{z>N/D^2}\Big(z^{-1} D^2q^{-1} +
D^{1+\varepsilon} z^{-1/2}\Big)\\
 & \ll & N^{1+\varepsilon} q^{-1} + N^{1/2+\varepsilon}.
\end{eqnarray*}
For $D\leq \frac{1}{4}q^{1/2}$, we argue as in the proof of
\cite[Lemma~1]{BP}. We have
\[
|\alpha d^2 - ad^2/q|\leq 4D^2|\alpha-a/q| \leq 4D^2q^{-2} \leq \frac{1}{4q},
\]
and therefore
\[
|\Upsilon(\alpha, D)| \leq 2 \sum_{D\leq d< 2D}
\left\|\frac{ad^2}{q}\right\| \ll q\log q \ll N^\varepsilon q.
\]
Taking these estimates together, we find that
\[
S(\alpha) \ll N^{1+\varepsilon}q^{-1} + N^{1/2+\varepsilon} + N^\varepsilon q,
\]
and the second term is always dominated by either the first or the last one,
which implies our theorem.
\end{proof}

Jan-Christoph Schlage-Puchta\\
Mathematisches Institut\\
Eckerstr. 1\\
79104 Freiburg\\
Germany\\
jcp@mathematik.uni-freiburg.de
\end{document}